\newtheorem{thm}{Theorem}
\newtheorem{defn}{Definition}
\newtheorem{lem}{Lemma}
\newtheorem{cor}{Corollary}
\newtheorem{xremark}{Remarks}
\newtheorem{prop}[thm]{Proposition}
\newtheorem{step}{Step}
\newcommand{\thmref}[1]{Theorem~\ref{#1}}
\newcommand{\lemref}[1]{Lemma~\ref{#1}}
\newcommand{\propref}[1]{Propotion~\ref{#1}}
\newcommand{\defnref}[1]{Definition~\ref{#1}}
\begin{document}
\title{Integration of algebroidal functions \footnote
{The research is  supported by the National Natural Science
Foundation of China(No.11501127), Guangdong Natural Science
Foundation(No.2015A030313628), Training plan for the  Distinguished
Young Teachers in Higher Education of Guangdong(No.Yqgdufe1405) and Foundation for Distinguished Young Talents in Higher Education of Guangdong Province (No.2014KQNCX068). Corresponding author:Kong Yinying}}

\author{
Sun Daochun $^1$, Huo Yingying$^2$, Kong Yinying$^{3}$, Chai Fujie $^4$\\
%EndAName
{\small 1.School of Mathematics,  South China Normal University,
Guangzhou, 510631, China}\\
{\small 2.School of Applied Mathematics, Guangdong University of
Technology, Guangzhou, 510520, China}\\
{\small 3.School of  Mathematics and Statistics, Guangdong University of Finance and Economics, Guangzhou,510320 China}\\
{\small 4.Guangzhou Civil Aviation College, Guangzhou, 510403, China}\\
{\small 1.~\ 1457330943@qq.com},
{\small 2.~\ fs$\_$hyy@aliyun.com},\\
{\small 3.~\ kongcoco@hotmail.com},
{\small 4.~\ chaifujie@sina.com}}

\maketitle

\begin{abstract}
  In this paper, we introduce the integration of algebroidal functions on Riemann surfaces for the first time. Some properties of integration are obtained.
By giving the definition of residues and integral function element, we obtain the condition that the integral is independent of path. At last, we prove that the integral of an irreducible algebroidal function is also an irreducible algebroidal function if all the residues at critical points are zeros.
\end{abstract}

 {\bf Keywords:}
~~algebroidal function, integral, integral function element, direct continuation.
\noindent {\bf AMS} Mathematics Subject Classification(2000): 30D30,
30D35

\section{Introduction}
Algebroidal functions are a kind of important multi-value functions. For example, in the field of complex differential equation, algebroidal solutions are more general than meromorphic solutions. But there are few results on algebroidal functions for lack of effective tools. Though integration is a basic definition, we have not seen any research on the integral of algebroidal functions, even in \cite{He,David}. Note that  the integral of algebroid functions  we define is completely different from  \cite{Davenport}. For holomorphic functions, the integrals can be defined as the limit of the same kind as that encountered in a usual integral. But for algebroidal functions, because of its multi-valuedness, we can not define the integral as what we usually do for holomorphic  functions. In this paper, we see algebroidal functions as single-valued on the Riemann surfaces. The definite integral on the Riemann surface is defined and some of its properties are obtained in \cite{farkas,lv}. Then by giving the definition of residues at critical points, we prove that the integral is independent of path if all the residues at critical points are zeros. Under the same assumption, we define the integral function element and prove that they are unchangeable for direct continuation. And last, we obtain the integral of an irreducible algebroidal function is also an irreducible algebroidal function if all the residues at critical
points are zeros.
\par

First, we provide the definition of algebroidal functions.
Let $A_1(z)$, $A_2(z)$, ..., $A_k(z)$ be a group of meromorphic functions in the complex plane $\mathbf{C}$, then the equation
in two variables
\begin{equation}\label{eq1}\Psi(W,z)= W^k+A_1(z)W^{k-1}+...+A_k(z)=0
\end{equation}
defines a $k$-valued \emph{algebroidal function} $W(z)$ in $\mathbf{C}$.

The equation \eqref{eq1} is irreducible if it can not be expressed as the product of two non-meormorphic functions. In this case, we say the
algebroidal function is irreducible. In this paper, we confine our consideration on irreducible algebroidal functions. We use the standard
definitions and notations of algebroidal functions ; e.g., see \cite{He,sun1,sun2,yangl,sun3}.\par

The resultant of $\Psi(W,z)$ and its partial derivative $\Psi_W(W,z)$, which
 is said to be the discrimination of $W(z)$, is denoted by $R(\Psi,\Psi_W)(z)$.
For an irreducible algebroidal function, we have  $R(\Psi,\Psi_W)(z)\not\equiv 0$. Hence, points in the complex plane can be divided into two kinds, say
critical points and regular points. By \emph{critical points}, we mean points in the set $S_W=:\{z;R(\Psi,\Psi_W)(z)\neq 0\}\cup\{z;z~ \text{is~the~pole~of~some} ~A_j(z), ~j=0,1,\cdots,k\}$. And points in the set $T_W=:\mathbf{C}-S_W$ are \emph{regular points}. It can be deduced that critical points are isolated. \par

\begin{defn}\label{d2}
By a \emph{function element} $(p(z),D_a)$, or $(p(z),a)$, we mean a simply-connected domain $D_a$ including $a\in\mathbf{C}$ and a holomorphic function $p(z)$ in $D_a$.
Two function elements $(p(z),a)$ and $(q(z),b)$ are \emph{equal}, if $a=b$ and there is a neighborhood $U$ of $a$ such that $p(z)=q(z)$ in $U$.
If for all $z\in D_a$, we have $\Psi(z,p(z))=0$, then $(p(z),D_a)$ is said to be a function element of the algebroidal function $W(z)$.\end{defn}
Suppose $W(z)$ is an irreducible algebroidal function defined by \eqref{eq1}. If there is an ordered pair $(w_0,z_0)$ satisfying
\begin{enumerate}
\renewcommand{\labelenumi}{(\roman{enumi})}
\item
$\Psi(w_0,z_0)=0$,

\item $\Psi_W(w_0,z_0)\neq0$,
\end{enumerate}
by the implicit function theorem,
then there uniquely exists a function element $(w(z),D_{z_0})$ such that $w(z_0)=w_0$ and $\Psi( w(z),z)\equiv0$ for all $z\in D_{z_0}$ (see \cite{lv}).
Or, the ordered pair $(w(z), D_{z_0})$ is a function element of the algebroidal function $W(z)$. We denote the set of all the function element of the algebroidal function $W(z)$ by $\widetilde{T}_W$.

\begin{defn}\label{d3}
A function element $(q(z),b)$ is said to be the \emph{direct continuation} of $(p(z),a)=(p(z),D_a)$, if two conditions hold:
\begin{enumerate}
\renewcommand{\labelenumi}{(\roman{enumi})}
\item
$b\in D_a$,

\item there is a neighborhood $U$ of $b$, such that $U\in D_a$ and $p(z)=q(z)$ for all $z\in U$.
\end{enumerate}
Hence, it can be denoted by $(p(z),b)=(q(z),b)$.\end{defn}
By this definition, we have for all $u\in D_a$, function element $(p(z),u)$ is a direct continuation of $(p(z),a)$.
\begin{xremark}
\begin{enumerate}
\renewcommand{\labelenumi}{(\roman{enumi})}
\item
~If $(w(z),a)$ is a function element of the algebroidal function $W(z)$, by the uniqueness of direct continuation, all its direct continuations are
belong to $W(z)$.

\item Function element $(w(z),D_a)$ and all its direct continuations $\{(q(z),b)\}$ $(b\in D_a)$ form a neighborhood $U_w$ of $(w(z),D_a)$ on the Riemann
surface. And the function $w[(q(z),b)]=:w(b)\in \mathbf{C}$ is an analytic function in $U_w$.
\end{enumerate}
\end{xremark}

The critical points have been so far excluded from our considerations. Next we will consider the points in $S_W$.
For any point $a\in S_W$, there is a element $(q(z), U_a)=(q(z),a)$, where $q(z)$ can be written as an Pusieux series
\begin{equation}\label{eqx1}
 q(z)=\sum^\infty_{n=u}B_{n}(z-a)^{n/m},~~~B_u\ne 0.\end{equation}
 Especially, when $m=1, u\geq0$, the element $(q(z),U_a)$ is the function element we mentioned above. When $u<0$, the element is said to be
 a \emph{pole element}. When $m>1$, we say that it is an \emph{algebraic element} and $a$ is a \emph{branch point} of order $m-1$. Both pole elements and algebraic elements are said to be \emph{singular elements}.

 ~\\
 From \cite{farkas}, we obtain that the domain of an irreducible algebroidal function is a Riemann surface composed by function elements and singular elements.
We then define the integrals on these Riemann surfaces.

\section{Definite integrals of algebroidal functions}

\begin{defn}\label{d4}~
Suppose that $\widetilde{\mathbf{C}}$ is the Riemann surface defined by a $k$-valued algebroidal function $W(z)$ and $\widetilde{L}$ is an arc on $\widetilde{\mathbf{C}}$ satisfying
$$\widetilde{L}:~~(w_t(z),z(t))~~(\alpha\leq
t\leq\beta)~~\subset ~~\widetilde{T}_W.
$$
The elements $(w_0(z),a=z(\alpha))$ and $(w(z),b=z(\beta))$ are the initial point and terminal point of $\widetilde{L}$, respectively. We choose $n$ points
along $\widetilde{L}$  arbitrarily:
$$(w_0(z),  z({t_0})=z(\alpha)),  ~(w_1(z),  z({t_1})),  ~ (w_2(z),  z({t_2})),  ~ \cdots,$$
  $$ (w_{n-1}(z),  z({t_{n-1}})),  ~(w(z),  z({t_n})=z(\beta)). $$
Since $W(z)$ is continuous on $\widetilde{L}$, when $\lambda=\max\limits_{0\leq j\leq n-1}|t_{j+1}-t_{j}|$ tends to zero,
 the limit
 $$J=:\lim\limits_{\lambda\rightarrow\infty}S_{n}=\sum\limits_{j=0}^{n-1}w_{j}(z(t_j))(z({t_{j+1})}-z({t_j}))
$$
exists. And we call it the \emph{definite integral} of $W(z)$ along $\widetilde{L}$.

\end{defn}

It can be deduced from \defnref{d4} that the followings hold.
\begin{prop}\label{p1}
\begin{enumerate}
\renewcommand{\labelenumi}{(\roman{enumi})}
\item$\int_{\widetilde{L}}\alpha W(z)dz=\alpha\int_{\widetilde{L}}W(z)dz$, where $\alpha\in\mathbf{C}$;

\item $$\int_{\widetilde{L}}W(z)dz=\int_{\widetilde{L}_{1}}W(z)dz+\int_{\widetilde{L}_{2}}W(z)dz
    +\cdots+\int_{\widetilde{L}_{n}}W(z)dz,$$
where $\widetilde{L}$ can be subdividing into $n$ subarcs $\widetilde{L}_i~(i=1,2,\cdots,n)$;

\item $$\int_{\widetilde{L}^{-}}W(z)dz
=-\int_{\widetilde{L}}W(z)dz,$$ where $\widetilde{L}^{-}$ is the opposite path of $\widetilde{L}$;

\item If $\widetilde{L}$ is a close path on the Riemann surface and there is no singular element in it, then
    $$\int_{\widetilde{L}}W(z)dz=0.$$

\end{enumerate}
\end{prop}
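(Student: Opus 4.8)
The plan is to get parts (i)--(iii) straight out of the Riemann--sum definition in \defnref{d4} by checking each identity at the level of the approximating sums $S_n$ and then passing to the limit $\lambda\to 0$. For (i), observe that along $\widetilde{L}$ the integrand of $\alpha W(z)$ at the point $(w_t(z),z(t))$ is simply $\alpha\,w_t(z(t))$, so the Riemann sum for $\alpha W$ over any partition equals $\alpha S_n$; letting $\lambda\to 0$ gives the claim. (Equivalently, if $W$ solves \eqref{eq1} then $\alpha W$ solves $V^{k}+\alpha A_1V^{k-1}+\cdots+\alpha^{k}A_k=0$, an irreducible equation when $\alpha\neq 0$, and $\widetilde{L}$ transports to the arc $(\alpha w_t(z),z(t))$ on the corresponding surface; the case $\alpha=0$ is trivial.) For (ii), given a decomposition $\widetilde{L}=\widetilde{L}_1\cup\cdots\cup\widetilde{L}_n$, I would take partitions of the $\widetilde{L}_i$ and use their union as a partition of $\widetilde{L}$; the Riemann sum over $\widetilde{L}$ is then exactly the sum of the Riemann sums over the $\widetilde{L}_i$, and since all meshes tend to $0$ together the identity survives in the limit. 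For (iii), a partition of the reversed arc $\widetilde{L}^{-}$ produces the ``right endpoint'' sum $\sum_j w(z(t_{j+1}))\bigl(z(t_j)-z(t_{j+1})\bigr)=-\sum_j w(z(t_{j+1}))\bigl(z(t_{j+1})-z(t_j)\bigr)$, which converges to $-\int_{\widetilde{L}}W(z)\,dz$ because left-- and right--endpoint sums share a common limit by the uniform continuity of $W$ along the compact arc.

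Part (iv) is the substantive one --- the algebroidal analogue of Cauchy's integral theorem --- and is where I expect the real work to lie. The hypothesis is read as: $\widetilde{L}$ bounds a region $\widetilde{D}$ on $\widetilde{\mathbf{C}}$ containing no singular element, so $\widetilde{D}\cup\widetilde{L}$ consists entirely of function elements. The plan is then: (a) use the description of $\widetilde{\mathbf{C}}$ from \cite{farkas} together with the uniqueness of direct continuation (\defnref{d3} and the Remarks) to see that the projection $z:\widetilde{D}\cup\widetilde{L}\to\mathbf{C}$ is unramified there, so that $\widetilde{L}$ determines, by direct continuation, a single--valued holomorphic branch $w=f(z)$ on the simply connected planar region swept out by $\widetilde{D}$; (b) identify the definite integral $\int_{\widetilde{L}}W(z)\,dz$ with the ordinary complex line integral $\int_{z(\widetilde{L})}f(z)\,dz$ along the projected closed curve; (c) conclude by the Cauchy--Goursat theorem that this integral is $0$.

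The delicate steps are (a) and (b), and they are the main obstacle. For (a), one must argue carefully that ``no singular element inside $\widetilde{L}$'' genuinely produces a simply connected, unramified piece of the surface on which one branch is globally single valued --- here I would lean on the structure theorem for $\widetilde{\mathbf{C}}$ and on part (ii) of the Remarks. For (b), one has to match a Riemann--Stieltjes sum in the parameter $t$ with a classical line integral in $z$; this is cleanest if the arcs are taken rectifiable (as is implicit already in \defnref{d4}), so that both integrals exist and agree by a standard refinement argument. An alternative that sidesteps (b) is to cover the compact arc $\widetilde{L}$ by finitely many function elements $(w(z),D_{a_i})$, choose a holomorphic primitive $F_i$ of the branch on each $D_{a_i}$, and observe that the integral telescopes into increments $F_{i+1}-F_i$ around the loop, which cancel because consecutive primitives differ by constants on overlaps and the loop closes up. Either route reduces (iv) to one--variable complex analysis once the surface bookkeeping is in place.
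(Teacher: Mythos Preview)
Your proposal is sound, but you should know that the paper gives \emph{no} proof of this proposition at all: it simply writes ``It can be deduced from \defnref{d4} that the followings hold'' and moves on. So there is nothing to compare against, and you have supplied far more than the authors did.

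Your treatment of (i)--(iii) via Riemann sums is exactly the routine verification the paper is implicitly invoking. For (iv) you are right that this is the only part with content, and both of your routes work. One caution on route (a)--(c): the claim that the projection of $\widetilde{D}$ is a \emph{simply connected planar region} on which a single branch lives is not automatic from ``no singular element inside''; what you actually get is that $z:\widetilde{D}\to\mathbf{C}$ is unramified. If $\widetilde{D}$ is a topological disk (the intended reading of ``inside $\widetilde{L}$''), then an unramified map from a disk is injective onto a simply connected image and your argument goes through; but you should state that step rather than assume it. Your telescoping alternative avoids this issue entirely and is the cleaner choice: it only needs that $\widetilde{L}$ is null-homotopic in the singularity-free region, which is exactly what ``bounds $\widetilde{D}$ with no singular element inside'' provides, and then the local primitives $F_i$ patch with constant discrepancies that cancel around the loop. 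Either way you have done more than the paper asks.
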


\subsection{Independence of path}
\begin{defn}\label{d5}
Suppose $(q(z),a)$ is a singular element of an algebroidal function $W(z)$,
where
$$q(z)=\sum^\infty_{n=u}B_{n}(z-a)^{n/m},~~~B_u\ne 0, m\geq 1 ~(\text{or}~u<0 ).$$
The complex number $m\cdot B_{-m}$ is called the \emph{residue} of the singular element $(q(z),a)$.
\end{defn}

\begin{lem}\label{l2} Suppose that $\widetilde{L}$ is a closed path on the Riemann surface with
only one singular element $(q(z),a)=(q(z),D_a)$ inside of it. If the residue of the singular element $(q(z),a)$ is
$mB_{-m}=0$, then
$$\int_{\widetilde{L}} W(z)dz=0.$$

\end{lem}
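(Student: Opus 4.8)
The plan is to reduce the closed-path integral to a local model near the unique singular point $a$ and then compute that local contribution directly from the Puiseux expansion. First I would invoke Proposition~\ref{p1}(iv): since the only singular element enclosed by $\widetilde{L}$ is $(q(z),D_a)$, I can deform $\widetilde{L}$ on the Riemann surface, staying within $\widetilde{T}_W$, to a small loop $\widetilde{\gamma}$ that winds once around the point lying over $a$ and is contained in a punctured neighbourhood on which the single branch $q(z)$ of Definition~\ref{d5} represents $W$. Concretely, on this punctured disc $W(z(t)) = q(z(t)) = \sum_{n=u}^{\infty} B_n (z-a)^{n/m}$, so the problem becomes evaluating $\int_{\widetilde{\gamma}} q(z)\,dz$.

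Next I would pass to the uniformizing coordinate. Near a branch point of order $m-1$ one writes $z - a = \zeta^{m}$, so that a single loop around $a$ on the Riemann surface corresponds to a single loop around $\zeta = 0$ in the $\zeta$-plane, and $q$ becomes the honest Laurent-type series $\widehat{q}(\zeta) = \sum_{n=u}^{\infty} B_n \zeta^{n}$ while $dz = m\zeta^{m-1}\,d\zeta$. Thus
\[
\int_{\widetilde{\gamma}} W(z)\,dz \;=\; \int_{|\zeta|=\varepsilon} \Bigl(\sum_{n=u}^{\infty} B_n \zeta^{n}\Bigr) m\zeta^{m-1}\,d\zeta
\;=\; m \sum_{n=u}^{\infty} B_n \int_{|\zeta|=\varepsilon} \zeta^{\,n+m-1}\,d\zeta .
\]
Since $\int_{|\zeta|=\varepsilon}\zeta^{\,j}\,d\zeta = 2\pi i$ when $j=-1$ and $0$ otherwise, only the term with $n+m-1 = -1$, i.e. $n = -m$, survives, giving $\int_{\widetilde{\gamma}} W(z)\,dz = 2\pi i\, m B_{-m}$. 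By hypothesis the residue $m B_{-m} = 0$, so this integral vanishes, and hence $\int_{\widetilde{L}} W(z)\,dz = 0$ as well. (The same computation covers the case $m=1$, $u<0$, where it is the ordinary residue theorem.)

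I expect the main obstacle to be the first step: justifying rigorously that $\widetilde{L}$ can be deformed to the small local loop $\widetilde{\gamma}$ \emph{on the Riemann surface}. This requires knowing that between $\widetilde{L}$ and $\widetilde{\gamma}$ there are no further singular elements, that the region swept out lies in $\widetilde{T}_W$ so that Proposition~\ref{p1}(iv) applies to the closed path $\widetilde{L} \cup \widetilde{\gamma}^{-}$ (suitably connected by a cut), and that the branch of $W$ along $\widetilde{L}$ is indeed the one whose Puiseux expansion at $a$ is $q(z)$. One must also be a little careful about the term-by-term integration of the Puiseux series, which is legitimate because on the compact loop $|\zeta| = \varepsilon$ the series $\sum_{n \geq u} B_n \zeta^{n}$ converges uniformly. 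Once the reduction to $\widetilde{\gamma}$ is in hand, the remaining computation is the routine uniformization argument sketched above.
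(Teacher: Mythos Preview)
Your proposal is correct and follows essentially the same route as the paper: reduce $\widetilde{L}$ to a small loop around the singular element via Proposition~\ref{p1}, expand $W$ in its Puiseux series, and integrate term by term so that only the $n=-m$ term survives, yielding $2\pi i\, m B_{-m}=0$. The only cosmetic difference is that the paper parameterizes the local loop directly by $z(t)=a+\varepsilon e^{it}$ with $t\in[0,2m\pi]$ (an $m$-fold circuit in the $z$-plane, hence one circuit on the Riemann surface), whereas you pass to the uniformizing coordinate $\zeta$ with $z-a=\zeta^{m}$; these are the same computation in different variables.
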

\begin{proof} Let $\varepsilon\in (0,r)$ be sufficiently small and $\widetilde{\Gamma}\subset\widetilde{T_W}$ be a closed curve on the Riemann surface
satisfying
$$\widetilde{\Gamma}_m:(w_{z(t)}(z), ~z(t)=a+\varepsilon e^{it})~(0\leq t\leq2m\pi).$$
By \propref{p1} and \defnref{d4}, we have
\begin{eqnarray*}&& \int_{\widetilde{L}}
W(s)ds=\int^{2m\pi}_{0}
\sum^\infty_{n=u}B_{n}r'^{n/m} e^{itn/m}~\cdot~  ir'
e^{it}dt\\
&&=i\sum^\infty_{n=u}B_{n} r'^{1+\frac{n}{m}}\int^{2m\pi}_{0}
e^{it(1+\frac{n}{m})}dt\\
&&=i B_{-m}\int^{2m\pi}_{0} dt=i B_{-m}2m\pi=0.
\end{eqnarray*}
\end{proof}

\begin{thm}\label{t4}(Independence of path)~Suppose that $\widetilde{\mathbf{C}}$ is a Riemann surface defined by
an irreducible algebroidal function $W(z)$ and the residue of every singular element on it is zero. Then for any close path $\widetilde{L}\subset \widetilde{T}_W$, we have
$$\int_{\widetilde{L}} W(z)dz=0.$$
\end{thm}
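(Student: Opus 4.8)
The plan is to reduce the general closed path $\widetilde{L}\subset\widetilde{T}_W$ to a finite combination of elementary loops each enclosing at most one singular element, and then apply \propref{p1}(iv) and \lemref{l2}. First I would observe that since $\widetilde{L}$ is a closed curve lying entirely in $\widetilde{T}_W$, its image is a compact subset of the Riemann surface, so it can enclose only finitely many singular elements (critical points are isolated, and on the compact region bounded by $\widetilde{L}$ there are only finitely many of them, say $a_1,\dots,a_p$ with corresponding singular elements $(q_1(z),a_1),\dots,(q_p(z),a_p)$). Around each $a_i$ choose a small loop $\widetilde{\gamma}_i\subset\widetilde{T}_W$ of the form used in \lemref{l2}, i.e.\ $z(t)=a_i+\varepsilon_i e^{it}$, $0\le t\le 2m_i\pi$, where $m_i$ is the branching order, traversed so that it is homotopic within $\widetilde{T}_W$ to the portion of $\widetilde{L}$ it replaces.

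Next I would invoke the standard deformation argument: on the Riemann surface $\widetilde{\mathbf{C}}$, the region cut out by $\widetilde{L}$ with the small disks around the $a_i$ removed contains no singular element, and $W(z)$ is single-valued and analytic there (this is exactly the content of the Remarks following \defnref{d3} — each function element has an analytic single-valued representative in its neighborhood $U_w$). Hence by \propref{p1}(iv) applied to the boundary of that region (or, equivalently, by subdividing $\widetilde{L}$ together with connecting cuts into finitely many closed paths each bounding a singular-element-free subregion and using \propref{p1}(ii)--(iii) to cancel the cuts), we get
$$\int_{\widetilde{L}} W(z)\,dz=\sum_{i=1}^{p}\int_{\widetilde{\gamma}_i} W(z)\,dz.$$
Finally, by hypothesis the residue $m_iB_{-m_i}^{(i)}$ of every singular element $(q_i(z),a_i)$ is zero, so \lemref{l2} gives $\int_{\widetilde{\gamma}_i}W(z)\,dz=0$ for each $i$, and therefore $\int_{\widetilde{L}}W(z)\,dz=0$.

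The main obstacle I anticipate is making the ``deformation'' step rigorous on the Riemann surface rather than in the plane: one must argue that the compact region bounded by $\widetilde{L}$ sits inside $\widetilde{\mathbf{C}}$ in a way that permits the usual plane-topology cut-and-paste, accounting correctly for branch points (so that a loop that winds around a branch point of order $m-1$ only closes up after going around $m$ times in the base variable $z$, which is why \lemref{l2} uses the parameter range $0\le t\le 2m\pi$). I would handle this by working locally: cover the closed region by finitely many coordinate patches, in each of which $W$ is represented by a genuine holomorphic (or Puiseux) function element, and then patch together the local Cauchy/Goursat vanishing statements exactly as in the classical proof that a line integral of an analytic function over a nullhomotopic loop vanishes — the only new ingredient being the $m$-fold covering behaviour near branch points, which \lemref{l2} already encapsulates. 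A secondary technical point is to choose the $\varepsilon_i$ small enough that the disks are disjoint and contained in the region bounded by $\widetilde{L}$, and to verify that the connecting cuts can be chosen inside $\widetilde{T}_W$; both are routine once the finiteness of the singular set inside $\widetilde{L}$ is established.
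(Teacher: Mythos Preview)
Your proposal is correct and follows essentially the same approach as the paper: subdivide $\widetilde{L}$ using finitely many auxiliary arcs so that each resulting closed loop encloses at most one singular element, invoke \propref{p1} for the cancellations, and then apply \lemref{l2}. The only minor difference is that the paper applies \lemref{l2} directly to the subdivided loops $\widetilde{L}_j$ (since \lemref{l2} already allows an arbitrary closed path enclosing a single singular element), so your intermediate reduction to small circles $\widetilde{\gamma}_i$ is not needed.
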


\begin{proof}~Since $\widetilde{L}$ is a closed path, there are only finitely many singular elements in it, say $(q_j(z),B(a_j,r_j))~(j=1,2,\dots,n)$. By adding finitely many curves $\widetilde{\gamma}_j\subset \widetilde{T}_W~(j=1,2,\cdots,2(n-1))$ in $\widetilde{L}$,  then $\widetilde{L}$ and $\widetilde{\gamma}_j~(j=1,2,\cdots,2(n-1))$ form finitely many closed curves $\widetilde{L}_j~(j=1,2,\cdots,n)$ and there is at most one singular element inside of each $\widetilde{L}_j$. Hence by \propref{p1}, \lemref{l2} and taking account of the cancellations along the curves $\widetilde{\gamma}_j~(j=1,2,\cdots,2(n-1))$, we have
\begin{eqnarray*}
\int_{\widetilde{L}} W(z)dz
=\sum\limits_{j=1}^n\int_{\widetilde{L}_j} W(z)dz=0.
\end{eqnarray*}

\end{proof}

\begin{xremark}
An algebroidal function satisfying the hypothesises in \thmref{t4} does exist. For example, $\sqrt{z}$ is an irreducible $2$-valued algebroidal function in the complex plane. Element $(\sqrt{z},0)$ is the only singular element on its Riemann surface, whose residue is zero.
\end{xremark}

By \thmref{t4}, we have
\begin{cor}\label{c1} Suppose that $W(z)$ is an algebroidal function  and all the residues of its singular elements are both zero.  Then the define integration
$$\int_{\widetilde{L}} W(z)dz$$
depends on the initial point $(w_0(z),a)$ and the terminal point $(w(z),b)$ only. Hence the define integration
$$\int^{(w(z),b)}_{(w_0(z),a)}W(z)dz$$
is well-defined.

\end{cor}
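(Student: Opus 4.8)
The statement to prove is Corollary~\ref{c1}, which asserts that under the hypothesis that all residues of singular elements vanish, the definite integral $\int_{\widetilde{L}} W(z)\,dz$ depends only on the initial element $(w_0(z),a)$ and the terminal element $(w(z),b)$, so that the notation $\int^{(w(z),b)}_{(w_0(z),a)} W(z)\,dz$ is unambiguous.

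\begin{proof}
The plan is to reduce path-independence to the vanishing of integrals over closed paths, which is exactly the content of \thmref{t4}. First I would take two arcs $\widetilde{L}_1$ and $\widetilde{L}_2$ on the Riemann surface $\widetilde{\mathbf{C}}$, both lying in $\widetilde{T}_W$, sharing the same initial element $(w_0(z),a)$ and the same terminal element $(w(z),b)$. Using part~(iii) of \propref{p1}, the reversed arc $\widetilde{L}_2^{-}$ runs from $(w(z),b)$ back to $(w_0(z),a)$ and satisfies $\int_{\widetilde{L}_2^{-}} W(z)\,dz = -\int_{\widetilde{L}_2} W(z)\,dz$. Concatenating $\widetilde{L}_1$ with $\widetilde{L}_2^{-}$ produces a closed path $\widetilde{L} := \widetilde{L}_1 + \widetilde{L}_2^{-}$ on the Riemann surface; since both pieces lie in $\widetilde{T}_W$, so does $\widetilde{L}$, and the concatenation is legitimate precisely because the terminal element of $\widetilde{L}_1$ equals the initial element of $\widetilde{L}_2^{-}$ (both are $(w(z),b)$) — this matching of function elements, not just of base points, is the reason we work on the Riemann surface rather than in $\mathbf{C}$.

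Next I would apply part~(ii) of \propref{p1} to split the integral over the concatenation, giving
$$\int_{\widetilde{L}} W(z)\,dz = \int_{\widetilde{L}_1} W(z)\,dz + \int_{\widetilde{L}_2^{-}} W(z)\,dz = \int_{\widetilde{L}_1} W(z)\,dz - \int_{\widetilde{L}_2} W(z)\,dz.$$
Since every singular element on $\widetilde{\mathbf{C}}$ has zero residue by hypothesis, \thmref{t4} applies to the closed path $\widetilde{L} \subset \widetilde{T}_W$ and yields $\int_{\widetilde{L}} W(z)\,dz = 0$. Combining the two displayed facts gives $\int_{\widetilde{L}_1} W(z)\,dz = \int_{\widetilde{L}_2} W(z)\,dz$, which is the desired independence of path. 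Consequently the common value depends only on $(w_0(z),a)$ and $(w(z),b)$, and the notation $\int^{(w(z),b)}_{(w_0(z),a)} W(z)\,dz$ is well-defined.

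The only delicate point is the concatenation step: one must check that $\widetilde{L}_1 + \widetilde{L}_2^{-}$ is genuinely a closed path on the Riemann surface in the sense required by \thmref{t4}, i.e. that it is a (piecewise $C^1$) loop lying in $\widetilde{T}_W$ whose start and end elements coincide. This is immediate from the definition of $\widetilde{L}_1$ and $\widetilde{L}_2$ sharing both endpoints as \emph{function elements}, but it is worth stating explicitly, since it is exactly the feature that fails for naive multivalued integration in the plane. Everything else is a direct bookkeeping application of \propref{p1} and \thmref{t4}.
\end{proof}
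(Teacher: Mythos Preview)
Your proof is correct and is exactly the intended argument: the paper itself gives no separate proof of this corollary, simply stating ``By \thmref{t4}, we have'' before it, and you have spelled out the standard deduction of path-independence from the vanishing of closed-path integrals using parts~(ii) and~(iii) of \propref{p1}.
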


\subsection{Integral elements}
In this part, we will define the integrals of all elements on the Riemenn surface.
First, we study the \emph{integral function elements}:
\begin{defn}\label{d6}
Suppose that $W(z)$ is an algebroidal function  and all the residues of its singular elements are both zero. Fix a function element $(w_a(z),a)$ on its corresponding Riemann surface. For any function element $(w_b(z),U_b)$, or $(w_b(z),b)$,
we define its integral function element as
\begin{eqnarray}
\label{eq2} (\int_{a}w_b(z),b):=(c_{a,b}+\int^z_{b}w_b(s)ds,~b),
\end{eqnarray}
where $\int^z_{b}w_b(s)ds$ is the complex integration and $c_{a,b}$ is the define integral
$$c_{a,b}=\int^{(w_b(z),b)}_{(w_a(z),a)}W(z)dz.$$
\end{defn}

It can be deduced from \defnref{d6} that the integral function elements are also function elements.
And we will see it is unchangeable for analytic continuation:

\begin{lem}\label{l3}
Suppose that $W(z)$ is an algebroidal function and all the residues of its singular elements are both zero.  Let function element $(w_b(z),  u)$ be a direct
continuation of function element $(w_b(z),  b)$.
Then the integral function element $(\int_{a}w_b(z),  u)$ is the direct continuation of $(\int_{a}w_b(z),  b)$.
\end{lem}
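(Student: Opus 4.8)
The plan is to unwind both definitions and check the two conditions of Definition~\ref{d3} directly. Write $(\int_a w_b(z),b) = (c_{a,b}+\int_b^z w_b(s)\,ds,\ b)$ and $(\int_a w_b(z),u) = (c_{a,u}+\int_u^z w_b(s)\,ds,\ u)$. Condition (i) is immediate: since $(w_b(z),u)$ is a direct continuation of $(w_b(z),b)$, we have $u\in D_b$, and the integral function element is supported on the same domain, so $u$ lies in the domain of $(\int_a w_b(z),b)$. The content of the lemma is therefore condition (ii): I must exhibit a neighborhood $U$ of $u$ on which the two holomorphic representatives agree.

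First I would compare the two primitives. On the common domain $D_b$ (shrinking if necessary to a simply connected neighborhood of $u$ contained in $D_b$), both $c_{a,b}+\int_b^z w_b(s)\,ds$ and $c_{a,u}+\int_u^z w_b(s)\,ds$ are holomorphic antiderivatives of the same holomorphic function $w_b(z)$, hence they differ by a constant. So it suffices to check that this constant is zero, i.e. that
\[
c_{a,u}+\int_u^z w_b(s)\,ds = c_{a,b}+\int_b^z w_b(s)\,ds
\]
for one value of $z$, say $z=u$. At $z=u$ the left side is $c_{a,u}$ and the right side is $c_{a,b}+\int_b^u w_b(s)\,ds$, so the claim reduces to the identity
\[
c_{a,u} = c_{a,b} + \int_b^u w_b(s)\,ds .
\]

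The second step is to prove that identity. By definition $c_{a,u}=\int_{(w_a(z),a)}^{(w_u(z),u)} W(z)\,dz$ and $c_{a,b}=\int_{(w_a(z),a)}^{(w_b(z),b)} W(z)\,dz$, where $w_u=w_b$ as germs near $u$ since $(w_b,u)$ is a direct continuation of $(w_b,b)$. Here is where I invoke Theorem~\ref{t4} / Corollary~\ref{c1}: because all residues vanish, the definite integral on the Riemann surface depends only on the endpoints, so I may compute $c_{a,u}$ along a path that first goes from $(w_a,a)$ to $(w_b,b)$ and then follows a short arc inside $D_b$ from $b$ to $u$ staying in the neighborhood $U_b$ (which lies in $\widetilde T_W$, carrying the single branch $w_b$). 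By additivity of the integral over concatenated arcs (Proposition~\ref{p1}(ii)), $c_{a,u} = c_{a,b} + \int_{\widetilde\sigma} W(z)\,dz$, where $\widetilde\sigma$ is that short arc. On $\widetilde\sigma$ the algebroidal function is represented by the single holomorphic branch $w_b(z)$, so its integral along $\widetilde\sigma$ coincides with the ordinary complex line integral $\int_b^u w_b(s)\,ds$, which by holomorphy of $w_b$ is path-independent and equals the value of the antiderivative. This yields exactly $c_{a,u}=c_{a,b}+\int_b^u w_b(s)\,ds$, completing the argument.

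The main obstacle is the bookkeeping in the second step: one has to be careful that the path realizing $c_{a,u}$ can be chosen to factor through $(w_b,b)$ with the tail arc lying entirely in the chart $U_b$ where $W$ is single-valued, and that this is legitimate requires the path-independence from Theorem~\ref{t4} together with the observation that a direct continuation keeps us on the same sheet. Once that is set up, everything else — the "two antiderivatives differ by a constant" observation and evaluating at $z=u$ — is routine. I would also note in passing that the lemma implicitly reconfirms that $(\int_a w_b(z),b)$ is itself a genuine function element (a holomorphic function on a simply connected domain), as already remarked after Definition~\ref{d6}, so that Definition~\ref{d3} applies.
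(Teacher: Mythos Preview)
Your proof is correct and follows essentially the same route as the paper: both arguments reduce to the identity $c_{a,u}=c_{a,b}+\int_b^u w_b(s)\,ds$, obtained by factoring the path from $(w_a,a)$ to $(w_b,u)$ through $(w_b,b)$ and invoking path-independence, and then combine $\int_b^u w_b + \int_u^z w_b = \int_b^z w_b$ to match the two representatives. Your version is slightly more explicit in checking the two conditions of \defnref{d3} and in framing the comparison via ``two antiderivatives differ by a constant,'' but the substance is the same.
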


\begin{proof}
It follows from \defnref{d6} that the integral function element of $(w_b(z), b)$ is
\begin{equation}\label{eq3}
(\int_{a}w_b(z), b)\stackrel{\eqref{eq2}}{=}(c_{a,  b}+\int^z_{b}w_b(s)ds,  ~b).
\end{equation}
And the integral function element of $(w_b(z), u)$ is
\begin{eqnarray*}
(\int_{a}w_b(z), u)\stackrel{\eqref{eq2}}{=}&&(\int^{(w_b(z), u)}_{(w_0(z), a)}W(z)dz+\int^z_{u}w_b(s)ds,  ~u)\\
=&&(\int^{(w_b(z), b)}_{(w_0(z), a)}W(z)dz+\int^{(w_b(z), u)}_{(w_b(z), b)}w_b(z)dz+\int^z_{u}w_b(s)ds, u)\\
=&&(c_{a, b}+\int^u_b w_b(z)dz+\int^z_{u}w_b(s)ds, u)
\end{eqnarray*}
That is
\begin{equation}\label{eq4}
(\int_{a}w_b(z), u)=(c_{a, b}+\int^z_b w_b(s)ds, u)\\
\end{equation}
By equality \eqref{eq3}, \eqref{eq4} and  \defnref{d3} ,  we can obtain that integral function element $(\int_{a}w_b(z), u)$ is the direct continuation
of integral function element $(\int_{a}w_b(z), b)$.
\end{proof}
Hence, for all $u\in U_b$,
integral function element $(\int_{a}w_b(z), u)$ is the direct continuation of $(c_{a, b}+\int^z_{b}w_b(s)ds, b)$.
 The following lemma shows that the singular elements are \emph{weakly bounded}. This implies that
every critical point is either analytic or a pole. For completeness, we give the proof of
it. We should mention that the proof has been given in \cite{lv,sun1}.

\begin{lem}\label{l1}
 Let $W(z)$ be an algebroidal function defined by \eqref{eq1}. Then for any $z_0\in\mathbb{C}$, there exist $r>0,~M>0~\text{and}~n\in\mathbf{N}_+$, such that for function element $(p(z),a)$ satisfying $\{0<|z-a|<r\}\subset T_W$, we have
$$ |(a-z_0)^np(a)|<M.$$

 \end{lem}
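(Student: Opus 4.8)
The plan is to prove this by reducing the statement to a growth estimate on the branches of $W(z)$ near $z_0$, using the fact that the elementary symmetric functions of those branches are the meromorphic coefficients $A_1(z),\dots,A_k(z)$. First I would observe that the claim is local and only nontrivial when $z_0$ is a critical point, since away from $S_W$ the function $W(z)$ is locally a holomorphic function and any branch is bounded near $a$. So fix $z_0$ and choose $r>0$ small enough that the punctured disc $\{0<|z-z_0|<r\}$ contains no critical point other than possibly $z_0$ itself (possible since critical points are isolated), and small enough that each $A_j$ is holomorphic on $\{0<|z-z_0|<r\}$ with at worst a pole at $z_0$; then fix $n\in\mathbf{N}_+$ so large that $(z-z_0)^{n}A_j(z)$ is holomorphic and bounded on $\{|z-z_0|<r/2\}$ for every $j=1,\dots,k$, say by a constant $M_0$.

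Next I would exploit the defining equation \eqref{eq1}. For any point $a$ with $0<|a-z_0|<r$ and any function element $(p(z),a)$ with $\{0<|z-a|<r\}\subset T_W$, the value $w=p(a)$ satisfies $w^k+A_1(a)w^{k-1}+\cdots+A_k(a)=0$. The key elementary fact is a bound for roots of a monic polynomial in terms of its coefficients: if $w^k+a_1w^{k-1}+\cdots+a_k=0$ then $|w|\le 1+\max_j|a_j|$ (or $|w|\le 2\max(1,\max_j|a_j|^{1/j})$, whichever normalization is cleanest). Applying this at $z=a$ and multiplying through by a suitable power of $(a-z_0)$ to clear the poles of the $A_j$, one gets $|(a-z_0)^{n}p(a)|$ bounded by an expression in the $(a-z_0)^{n}A_j(a)$, hence by a constant $M$ depending only on $M_0$, $k$ and $r$ — not on $a$ or on the choice of element. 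That is exactly the asserted inequality.

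The main obstacle is bookkeeping the power of $(z-z_0)$ correctly: the root bound is not homogeneous of a single degree in the coefficients (the $j$-th coefficient enters like a $j$-th power of the root), so one must either take $n$ to be a common multiple absorbing $\max_j\lceil(\text{pole order of }A_j)/j\rceil$ times $k$, or argue iteratively. I would handle this by writing $w=p(a)$, setting $v=(a-z_0)^{n}w$, and checking that $v$ satisfies a monic polynomial whose coefficients are $(a-z_0)^{jn}A_j(a)$ up to the needed shift, so that choosing $n$ at least the largest pole order of the $A_j$ makes every coefficient bounded on $\{|z-z_0|<r/2\}$; the root bound then gives $|v|<M$. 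A minor point to state carefully is that the hypothesis $\{0<|z-a|<r\}\subset T_W$ guarantees $p$ extends as a genuine holomorphic branch there, so $p(a)$ is a well-defined root of $\Psi(\cdot,a)$ and the polynomial estimate applies verbatim; no appeal to the Riemann surface structure is actually needed beyond this.
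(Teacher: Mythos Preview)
Your proposal is correct and rests on the same core idea as the paper: since $p(a)$ is a root of the monic polynomial $W^k+A_1(a)W^{k-1}+\cdots+A_k(a)$ and the $A_j$ have at worst poles at $z_0$, one can bound $(a-z_0)^n p(a)$ uniformly. The execution differs in one place. You resolve the homogeneity issue by substituting $v=(a-z_0)^n p(a)$, obtaining a monic polynomial in $v$ with coefficients $(a-z_0)^{jn}A_j(a)$, and then invoking the standard root bound $|v|\le 1+\max_j|\text{coeff}_j|$. The paper instead splits into two cases: if $|p(a)|\ge 1$, divide the defining equation by $p(a)^{k-1}$ to obtain
\[
p(a)=-\Bigl(A_1(a)+\frac{A_2(a)}{p(a)}+\cdots+\frac{A_k(a)}{p(a)^{k-1}}\Bigr),
\]
multiply by $(a-z_0)^n$ with $n$ the maximal pole order of the $A_j$, and bound by $|(a-z_0)^n|\sum_j|A_j(a)|$; if $|p(a)|<1$ the bound is trivial. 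Your substitution is tidier and packages the argument into one line, while the paper's case split avoids altogether the bookkeeping you flagged as the main obstacle, since no rescaling of the variable is needed and a single power of $(a-z_0)$ suffices.
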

\begin{proof} Since the coefficients in equation \eqref{eq1} are meromorphic functions, they can be represented as Laurent series in $\{0<|z-z_0|<r\}$, respectively.
That is
\begin{eqnarray*}
A_j(z)=\sum\limits_{l=n_j}^{+\infty} a_l^{(j)}(z-z_0)^{l},~~~a_{n_j}^{(j)}\neq 0, j=1,2,\cdots,k
\end{eqnarray*}
Let $n_0=\min\{n_j;j=1,2,\cdots,k\}$. Then there is a positive number $M$ such that for any $a\in\{0<|z-z_0|<r\}$
$$|(a-z_0)^{n_0}|(|A_1(a)|+|A_2(a)|+\cdots+|A_k(a)|)\leq M.$$
When $|p(a)|\geq 1$, we have
\begin{eqnarray*}
&&|(a-z_0)^{n_0}p(a)|=\left|(a-z_0)^{n_0}\left(A_1(a)+\frac{A_2(a)}{p(a)}+\cdots+\frac{A_k(a)}{p^{k-1}(a)}\right)\right|\\
&&\leq |(a-z_0)^{n_0}|\left(|A_1(a)|+\frac{|A_2(a)|}{|p(a)|}+\cdots+\frac{|A_k(a)|}{|p^{k-1}(a)|}\right)\\
&&\leq |(a-z_0)^{n_0}|(|A_1(a)|+|A_2(a)|+\cdots+|A_k(a)|)\leq M.
\end{eqnarray*}
When $|p(a)|<1$, we have
$$|(a-z_0)^{n_0}p(a)|<r<M.$$

\end{proof}
\begin{lem}\label{l4}(weakly bounded) ~Suppose that $W(z)$ is an algebroidal function  and all the residues of its singular elements are both zeros. Fix a function element $(w_a(z),a)$ on its corresponding Riemann surface. Then for any $z_0\in \mathbf{C}$, there exist two real positive numbers $M,~r$ and an integer $n\geq 0$ such that for any integral function element
$(\int_a w_a(z),u)$ satisfying $u\in \{0<|z-z_0|<r\}$, we have
$$ |(u-z_0)^n   \int_a w_a(u)|<M.$$

\end{lem}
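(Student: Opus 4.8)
The plan is to reduce the statement to a Laurent/Puiseux computation on a punctured disc around $z_0$. First I would read off what $\int_a w_a(u)$ means: by \defnref{d6}, if $(w_u(z),u)$ is the branch of $W$ over $u$ through which we arrive, its integral function element is $(c_{a,u}+\int_u^z w_u(s)\,ds,\,u)$, whose value at the centre $z=u$ is simply $c_{a,u}=\int_{(w_a(z),a)}^{(w_u(z),u)}W(z)\,dz$; this number is well defined by Corollary~\ref{c1}. So the lemma amounts to: there are $r,M>0$ and an integer $n\ge0$ with $|(u-z_0)^{n}c_{a,u}|<M$ for all $u$ with $0<|u-z_0|<r$ and all branches over $u$. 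Since critical points are isolated, I would fix $r\in(0,1)$ so small that $\Delta^{*}:=\{0<|z-z_0|<r\}$ contains no critical point, i.e. $\Delta^{*}\subset T_W$.

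The key step is the following. Over $\Delta^{*}$ the Riemann surface of $W$ is an unbranched $k$-sheeted covering, hence a disjoint union of finitely many connected components $C_1,\dots,C_p$ $(p\le k)$, each of which covers $\Delta^{*}$ cyclically: we may introduce on $C_i$ a coordinate $\zeta_i$ with $z-z_0=\zeta_i^{m_i}$ $(m_i\ge1)$, and on it $W$ equals the single-valued branch $\widetilde W_i(\zeta_i)=\sum_{n\ge u_i}B_n^{(i)}\zeta_i^{\,n}$, the Puiseux series of the singular element $(q_i(z),z_0)$, which has finite lowest index $u_i$ by \lemref{l1}. Then $dz=m_i\zeta_i^{m_i-1}\,d\zeta_i$, so $W\,dz=m_i\sum_{n\ge u_i}B_n^{(i)}\zeta_i^{\,n+m_i-1}\,d\zeta_i$, and the only summand whose primitive is not single-valued is the one with $n=-m_i$, with coefficient $m_iB_{-m_i}^{(i)}$. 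But this coefficient is exactly the residue of $(q_i(z),z_0)$ in the sense of \defnref{d5}, hence $0$ by hypothesis. Therefore $F_i(\zeta_i):=m_i\sum_{n\ge u_i,\,n\neq -m_i}\frac{B_n^{(i)}}{n+m_i}\zeta_i^{\,n+m_i}$ is single-valued on $C_i$, meromorphic at $\zeta_i=0$, and satisfies $dF_i=W\,dz$ there; pick an integer $\nu_i\ge0$ with $\zeta_i^{\nu_i}F_i(\zeta_i)$ bounded as $\zeta_i\to0$.

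To conclude I would fix on each $C_i$ one base element $(w^{(i)}(z),b_i)$ with $b_i\in\Delta^{*}$ and set $d_i=\int_{(w_a(z),a)}^{(w^{(i)}(z),b_i)}W\,dz$, a finite list of constants. For $u\in\Delta^{*}$ with its branch on $C_i$, Corollary~\ref{c1} lets us evaluate $c_{a,u}$ by going first to $(w^{(i)}(z),b_i)$ and then along a path inside $C_i$, giving $c_{a,u}=d_i-F_i(\zeta_i(b_i))+F_i(\zeta_i(u))=E_i+F_i(\zeta_i(u))$ with $E_i$ constant. Taking $n=\max_i\nu_i$ and $M$ an appropriate bound assembled from the $|E_i|$ and from $\sup|\zeta^{\nu_i}F_i(\zeta)|$, and using $r<1$ together with $|\zeta_i(u)|=|u-z_0|^{1/m_i}\le1$ to absorb the surplus powers of $|u-z_0|$, one obtains $|(u-z_0)^{n}c_{a,u}|<M$ for every admissible $u$; finiteness of $p$ makes this choice uniform. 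I expect the only real subtlety to be the identification in the second step: that the paper's residue $m_iB_{-m_i}^{(i)}$ is precisely the $\zeta_i^{-1}\,d\zeta_i$-coefficient that must vanish for $W\,dz$ to have a single-valued primitive after the substitution $z-z_0=\zeta_i^{m_i}$. The structure of the surface over a punctured disc and the pole-type (not essential) nature of the singularity of $W$ there (\lemref{l1}) are standard, and everything after the second step is bookkeeping with finitely many additive constants.
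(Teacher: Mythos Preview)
Your proposal is correct and the overall skeleton matches the paper's: restrict to a small punctured disc $\Delta^*$ around $z_0$ containing no other critical points, fix a finite list of base elements over $\Delta^*$, split $c_{a,u}$ as (bounded constant from $a$ to a base point) $+$ (local integral from the base point to $u$), and bound the second piece. Where you genuinely diverge from the paper is in how you control that local piece. The paper cuts $\Delta^*$ to a simply connected domain $B_-$, takes the $k$ single-valued branches $w_1,\dots,w_k$ there, invokes \lemref{l1} to get $|w_j(s)|\le M/|s-z_0|^n$, and then estimates $\big|\int_b^u w_j(s)\,ds\big|$ crudely by (path length)$\times$(maximum of the integrand), arriving at a bound of the form $H+2\pi|b|\cdot M/|u-z_0|^n$. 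In particular the paper never uses the residue hypothesis in the estimate itself; it is used only upstream, via Corollary~\ref{c1}, to make $c_{a,u}$ well defined.

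Your route instead passes to the Puiseux uniformisation $z-z_0=\zeta_i^{m_i}$ on each cyclic sheet, integrates the series term by term, and observes that the residue hypothesis is precisely the vanishing of the $\zeta_i^{-1}\,d\zeta_i$ coefficient, so the local primitive $F_i$ is single-valued and meromorphic at $\zeta_i=0$. This is cleaner and more informative: it explains \emph{why} the integral function element has at worst a pole (no logarithm) at $z_0$, which is exactly what is used downstream in \thmref{t1}, and it gives the pole order of the primitive explicitly as $\max(0,-(u_i+m_i))$. The paper's argument is more elementary (no series manipulation, just a length estimate), but its final displayed inequality is stated loosely; your version is tighter and makes the role of the hypothesis transparent.
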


\begin{proof}
Let $z_0\in S_W$. Then by \lemref{l1}, there exist $r>0$, $M>0$ and an integer $n\geq0$ such that for any function element
$(w(z),u)$ satisfying $u\in \{0<|z-z_0|<r\}$, we have
$$ |(u-z_0)^n   w(u)|<M.$$
Cutting $B_0$ into simply connected domain $B_-:=\{0<|z-z_0|<r\cap
|\arg(z-z_0)|<\pi\}\subset T_W$, we have $k$ single branches $w_1(z)$,$w_2(z)$,...,$w_k(z)$ of $W(z)$.\par
For $b\in
B_-$, there are $k$ distinct function elements $(w_1,b)$,$(w_2,b)$,...,$(w_k,b)$. By \defnref{d6}, the corresponding integral function elements are
$$
(\int_{a}w_j(z),b):=(\int^{(w_j(z),b)}_{(w_0(z),a)}W(z)dz+\int^z_{b}w_j(s)ds,~~~b),~~~j=1,2,...,k.$$
Let $$
H:=\max\{|\int^{(w_j(z),b)}_{(w_0(z),a)}W(z)dz|,~~j=1,2,...,k\}.$$
For for any $(u\in B_0)$, the integral function elements of $(w_j(z),  u))$ $(j=1,2,...,k)$ are $(\int_{a}w_j(z),  u)$ $(j=1,2,...,k)$, respectively.  It follows from $\lemref{l3}$ that they are the direct continuations of $(\int_{a}w_j(z),b)$. Hence, for any $u\in
B_-\cap \{|u|>|b|\}$, we have
$$|\int^{(w_j(z),b)}_{(w_0(z),a)}W(z)dz|+|\int^u_{b}w_j(s)ds|\leq
H+|\int^u_{b}w_j(s)ds|\stackrel{\eqref{eq3}}{\leq}
 H+ 2\pi |b|\cdot
\frac{M}{|u-z_0|^n}.
$$
\end{proof}

\section{Integral of algebroidal function}
Then we can define the integral of algebroidal function:
\begin{defn}\label{d7} ~We say $M(z)$ is the \emph{integral} of an algebroidal function $W(z)$ if $M'(z)=W(z)$.
\end{defn}

\begin{thm}\label{t1}~~Suppose that $W(z)$ is an irreducible algebroidal function and all the residues of its singular elements are both zeros. Then the integral of $W(z)$ exists and it is also irreducible.
Further, if $(w_a(z),a)$ is given, it is unique.
\end{thm}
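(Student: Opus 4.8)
The plan is to build the integral $M(z)$ globally out of the integral function elements introduced in Definition~\ref{d6}, then verify irreducibility via the algebroidal equation the integral satisfies. First I would fix the chosen function element $(w_a(z),a)$ and, for every regular point $b\in T_W$ and every function element $(w_b(z),b)$ of $W$, assign the integral function element $(\int_a w_b(z),b)$ as in \eqref{eq2}. The content of \thmref{t4} and \corref{c1} is exactly what makes the constant $c_{a,b}=\int^{(w_b(z),b)}_{(w_a(z),a)}W(z)\,dz$ well-defined (path-independence follows from all residues being zero), and \lemref{l3} shows these integral function elements are compatible under direct continuation. Hence they glue together: as $(w_b(z),b)$ ranges over $\widetilde{T}_W$, the elements $(\int_a w_b(z),b)$ form a consistent family that defines a single-valued analytic function $M(z)$ on the part of the Riemann surface over $T_W$, with $M'=W$ there by construction. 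Next I would extend $M$ across the critical points: \lemref{l4} (weak boundedness) shows each $\int_a w_b(u)$ grows at most polynomially near any $z_0\in S_W$, so the singularities of $M$ over $S_W$ are at worst poles or branch points of finite order, i.e. $M$ extends to an honest function on a Riemann surface of the same type. Thus the integral $M(z)$ exists as an algebroidal-type function, and uniqueness once $(w_a(z),a)$ is prescribed is immediate: any two antiderivatives differ locally by a constant, and connectedness of the surface plus the normalization $M(a)$-value forces them to coincide.

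The substantive part is showing $M(z)$ is again a $k$-valued \emph{irreducible} algebroidal function. For this I would first argue $M$ has exactly $k$ branches: over a regular point $b$, the $k$ distinct function elements $(w_1(z),b),\dots,(w_k(z),b)$ of $W$ produce $k$ integral function elements $(\int_a w_j(z),b)$, and these are pairwise distinct because their derivatives $w_j$ are pairwise distinct (two branches agreeing on a neighborhood would force $w_i\equiv w_j$, contradicting irreducibility of $W$). So $M$ satisfies a monic degree-$k$ polynomial $\Phi(M,z)=\prod_{j=1}^k(M-M_j(z))=M^k+C_1(z)M^{k-1}+\cdots+C_k(z)=0$ whose coefficients $C_i(z)$ are the elementary symmetric functions of the branches; these are single-valued (symmetric under monodromy) and, by the weak-boundedness estimate of \lemref{l4} applied to each branch, meromorphic on $\mathbf{C}$ — including at the critical points, where the polynomial growth rules out essential singularities. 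Hence $M$ is a $k$-valued algebroidal function in the sense of \eqref{eq1}.

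For irreducibility I would use the standard criterion: $\Phi(M,z)$ is irreducible iff the monodromy group acting on the $k$ branches of $M$ is transitive. Since $W$ is irreducible, its monodromy permutes its $k$ branches $w_1,\dots,w_k$ transitively; the continuation of the integral element $(\int_a w_j(z),\cdot)$ along a loop is governed (via \lemref{l3} and the path-independence from \thmref{t4}) by the continuation of $w_j$ together with an additive constant coming from the period integral around the loop, so the induced permutation of the branches $M_1,\dots,M_k$ of $M$ is the \emph{same} permutation that $W$ undergoes. Therefore the monodromy of $M$ is transitive as well, and $\Phi(M,z)$ cannot factor into lower-degree polynomials with meromorphic coefficients; equivalently, if $\Phi=\Phi_1\Phi_2$ were a nontrivial factorization into monic polynomials with meromorphic coefficients, differentiating and using $M'=W$ would produce a corresponding nontrivial factorization of $\Psi(W,z)$, contradicting the hypothesis. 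The main obstacle I anticipate is the bookkeeping at the critical points: one must be careful that, over a branch point of $W$ of order $m-1$, the integral function element genuinely closes up into an algebraic element (a Puiseux series in $(z-a)^{1/m}$ with finite principal part) rather than acquiring a logarithmic term — and this is precisely where the hypothesis that \emph{all residues $mB_{-m}$ vanish} is indispensable, since a nonzero residue would integrate to a $\log$ and destroy the algebroidal structure. The rest is organizing the local-to-global gluing and the symmetric-function argument cleanly.
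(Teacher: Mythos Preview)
Your construction of $M(z)$ is exactly the paper's: build the integral function elements from Definition~\ref{d6}, use \lemref{l3} to make them cohere under continuation, form the elementary symmetric functions of the $k$ branches $\int_a w_j(z)$ to obtain single-valued coefficients, and invoke \lemref{l4} to see these coefficients are meromorphic at the critical points, yielding the degree-$k$ algebroidal equation for $M$. The only genuine difference is in the irreducibility step. The paper argues by contradiction in the way you list as your ``equivalently'' alternative: a hypothetical $q$-valued algebroidal factor $M_q(z)$ of $M$ would, upon differentiation, produce a monic degree-$q$ factor $(W-w_1)\cdots(W-w_q)$ of $\Psi(W,z)$ with meromorphic coefficients, contradicting the irreducibility of $W$. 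Your primary route---observing that, thanks to path-independence (\thmref{t4}), analytic continuation of $\int_a w_j$ along a loop is governed \emph{exactly} by the permutation $w_j\mapsto w_{\sigma(j)}$ of the branches of $W$, so the monodromy of $M$ inherits the transitivity of that of $W$---is a cleaner and more conceptual argument that the paper does not make explicit; it also sidesteps the small verification (implicit in the paper) that the coefficients $d_i(z)$ of the differentiated factor are indeed meromorphic.
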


\begin{proof}
\begin{step} For all $b\in T_W$,  there are $k$ distinct function elements $\{(w_j(z), b)=(w_j(z),  U_b)\}^k_{j=1}$.

 It follows from \defnref{d6}, there are $k$ function elements
\begin{equation}\label{eq5}(\int_{a}w_j(z),U_b):=(\int^{(w_j(z),b)}_{(w_0(z),a)}W(z)dz+\int^z_{b}w_j(s)ds,~~~ b), ~~j=1, 2, \cdots, k.
\end{equation}
By \lemref{l3}, for $u\in U_b$,
there are $k$ integral function elements
$$(\int_{a}w_j(z),u):=(\int^{(w_j(z),b)}_{(w_0(z),a)}W(z)dz+\int^z_{b}w_j(s)ds,~~~ u),  ~~j=1, 2, \cdots, k;~~u\in U_b.
$$
They are the direct continuations of $(\int_{a}w_j(z),  b)$, respectively.
Then we can define a group of functions in $U_b$£º
\begin{eqnarray*}
B_1(z):&=&-\int_{a}w_1(z)-\int_{a}w_2(z)-\cdots-\int_{a}w_k(z);\\
B_2(z):&=&\sum_{1\leq i<j\leq k}\int_{a}w_i(z)\int_{a}w_j(z);\\
\cdots&&\cdots\\
B_k(z):&=&(-1)^k\int_{a}w_1(z)\int_{a}w_2(z)\cdots\int_{a}w_k(z)
\end{eqnarray*}
It is trivial that $\{B_j(z)\}^k_{j=1}$ are single-valued analytic functions in $U_b$.
Further, they are analytic in $T_W$. \par
For any $d\in S_W$,  it is isolated.
By \lemref{l4},  $d$ is a pole  of  all $\{B_j(z)\}^k_{j=1}$ at most. Hence, we obtain $k$ meromorphic functions $\{B_j(z)\}^k_{j=1}$ in $\mathbf{C}$. \par
We form now an equation by $\{B_j(z)\}^k_{j=1}$ as follow
\begin{equation}\label{eq4}  M^k+B_1(z)M^{k-1}+...+B_k(z)=0.\end{equation}
Then the  $k$-valued algebroidal function $M(z)$ is the integral of algebroidal function $W(z)$.
\end{step}
\begin{step} We then turn to prove that $M(z)$ is irreducible. Otherwise, equation \eqref{eq4} can be composed by a $q$-valued ($q<k$)
algebroidal function $M_q(z)$. We suppose the derivative of $M_q(z)$ is defined by
$$ (W-w_1(z))(W-w_2(z))...(W-w_q(z))=W^q+d_1(z)W^{q-1}+...+d_q(z)=0.$$
Then the above equation is a factor of the equation \eqref{eq1}, which contradicts that $W(z)$ is irreducible.
\end{step}
\end{proof}

\begin{cor}\label{c2} Suppose $f(z)$ is a meromorphic function in the simply-connected domain $D$ and all the residues at poles are zeros. Then the integral of
meromorphic function  $f(z)$ exists and it is also a meromorphic function. If the initial point is given, the integral is unique.\end{cor}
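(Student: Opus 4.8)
The plan is to recognize a meromorphic function as the simplest possible algebroidal function and then read off the conclusion from \thmref{t1}. Indeed, a meromorphic function $f$ on $D$ is a $1$-valued algebroidal function, defined by $\Psi(W,z)=W-f(z)=0$; since the degree in $W$ is $k=1$, this equation is trivially irreducible, its partial derivative $\Psi_{W}\equiv 1$ never vanishes, and hence the only critical points are the poles of $f$. Near such a pole $a$ the unique singular element is the pole element $\bigl(\sum_{n=u}^{\infty}B_{n}(z-a)^{n},a\bigr)$ with $m=1$, whose residue in the sense of \defnref{d5} is $1\cdot B_{-1}=B_{-1}$, that is, the classical residue of $f$ at $a$. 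Therefore the corollary's hypothesis ``all residues at the poles are zero'' is precisely the hypothesis ``all residues of the singular elements are zero'' of \thmref{t1}.

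First I would observe that the Riemann surface of this $1$-valued function, restricted to $D$, is $D$ itself, and that every result from Proposition~\ref{p1} through \thmref{t1} stays valid word for word when the plane $\mathbf{C}$ is replaced by the simply-connected domain $D$: the arguments use only that $D$ is simply connected (so that closed paths bound regions and the deformation/cancellation step behind \thmref{t4} and Corollary~\ref{c1} applies), that critical points are isolated, and that a meromorphic function admits a Laurent expansion near each of its poles -- all of which hold over $D$. With this in hand, \thmref{t1} applied to $W=f$ on $D$ yields an integral $M(z)$ which is again a $1$-valued irreducible algebroidal function on $D$, i.e.\ a meromorphic function on $D$ with $M'=f$; and once the initial function element $(f,a)$ is prescribed -- which, $f$ being single-valued, just fixes a base point $a$ and the normalization $M(a)=0$ -- this $M$ is unique.

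To make the corollary self-contained one can instead specialize the proofs of \thmref{t4} and \thmref{t1}. Simple connectivity of $D$, together with \lemref{l2} and the vanishing of every residue, gives $\int_{\widetilde{L}}f=0$ along every closed path in $D$ avoiding the poles of $f$, so $M(z):=\int_{a}^{z}f(s)\,ds$ is a well-defined single-valued holomorphic function on $D$ minus the poles of $f$. Near a pole $a$ one integrates the Laurent series of $f$ term by term -- the term $B_{-1}(z-a)^{-1}$ being absent because the residue is $0$ -- obtaining $M(z)=\sum_{n\ge u,\,n\ne -1}\frac{B_{n}}{n+1}(z-a)^{n+1}+\mathrm{const}$, which is holomorphic at $a$ when $u\ge 0$ and has a pole of order $|u|-1$ when $u\le -2$; hence $M$ is meromorphic on all of $D$. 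Uniqueness follows because two antiderivatives of $f$ differ locally by a constant, hence by one constant on the connected set $D$, and the normalization $M(a)=0$ removes it.

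The only step that needs genuine care is the passage from $\mathbf{C}$ to an arbitrary simply-connected $D$ inside \thmref{t1} and the lemmas it depends on, notably \lemref{l1} and \lemref{l4}, whose statements are phrased for $z_{0}\in\mathbf{C}$: one has to check that all that is actually used is ``$z_{0}\in D$'' and ``$\{0<|z-z_{0}|<r\}\subset D$''. This is indeed the case, and this $D$-version is genuinely needed, since a function meromorphic on $D$ need not extend meromorphically to $\mathbf{C}$. Everything beyond that is the routine bookkeeping already carried out in the proof of \thmref{t1}.
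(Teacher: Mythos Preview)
Your proposal is correct and follows the paper's own approach: the paper states Corollary~\ref{c2} immediately after \thmref{t1} with no separate proof, treating it as the $k=1$ specialization, and that is exactly what you do. In fact you supply considerably more detail than the paper --- in particular your observation that the results must be transplanted from $\mathbf{C}$ to a general simply-connected domain $D$, and your explicit term-by-term integration near a pole, are refinements the paper leaves implicit.
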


\begin{thm}\label{t4}
~~Suppose that $W(z)$ is an irreducible algebroidal function and all the residues of its singular elements are both zeros. Then we can obtain a family of integral
of $W(z)$ with an arbitrarily constant.
\end{thm}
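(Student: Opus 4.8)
The plan is to leverage Theorem 5 (existence and uniqueness once a germ $(w_a(z),a)$ is fixed) together with the linearity of the integral. First I would observe that by Theorem 5, for each choice of initial function element $(w_a(z),a)$ there is a unique irreducible algebroidal function $M(z)$ with $M'(z)=W(z)$ whose construction runs through Definition 7: the branches of $M$ are the integral function elements $(\int_a w_j(z),u)$, which in turn contain the additive constants $c_{a,b}=\int^{(w_b(z),b)}_{(w_a(z),a)}W(z)dz$. The key point is to track how $M$ changes when the base germ is replaced by another one sitting over the same base point $a$, or more simply when one adds a constant to the chosen antiderivative.

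Next I would make the dependence on the constant explicit. Fix the base germ $(w_a(z),a)$ and let $M(z)$ be the integral produced by Theorem 5. For an arbitrary $C\in\mathbf{C}$, consider the shifted collection of function elements $(\int_a w_j(z)+C,\,u)$; since adding the same constant $C$ to every branch is compatible with direct continuation (the argument of Lemma 3 goes through verbatim, as the constant is unaffected by the continuation step), these patch together into a single-valued analytic object over $T_W$, and the associated symmetric functions $B_j^{(C)}(z)$ — the elementary symmetric polynomials in the $\int_a w_j(z)+C$ — are again meromorphic on $\mathbf{C}$ by Lemma 4 (weak boundedness is preserved under an additive constant). This yields an equation
\begin{equation}
M^k+B_1^{(C)}(z)M^{k-1}+\cdots+B_k^{(C)}(z)=0,
\end{equation}
defining a $k$-valued algebroidal function $M_C(z)$ with $M_C'(z)=W(z)$. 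Irreducibility of $M_C$ follows exactly as in Step 2 of the proof of Theorem 5: a nontrivial factorization of $M_C$ would, upon differentiating, produce a proper factor of $\Psi(W,z)$, contradicting irreducibility of $W$. Thus $\{M_C(z):C\in\mathbf{C}\}$ is the desired family, and distinct values of $C$ give genuinely distinct algebroidal functions since $M_C-M_0\equiv C$ on each branch.

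Finally I would close the loop by noting that this family is exhaustive: if $N(z)$ is any algebroidal integral of $W(z)$, then on a common neighborhood of a regular point each branch of $N$ and the corresponding branch of $M_0$ differ by a locally constant function, and by the independence-of-path statement (Theorem 4 / Corollary 1) this difference is in fact a global constant $C$, so $N=M_C$. The main obstacle I anticipate is purely bookkeeping rather than conceptual: one must check carefully that the additive constant does not interfere with the meromorphy argument at critical points — i.e. that Lemma 4 still applies when $w_a$ is replaced by $w_a$-plus-constant — and that "all branches shifted by the same $C$" is consistent across the different charts $U_b$ covering $T_W$, which is where the uniqueness clause of Theorem 5 is really doing the work. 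Everything else is a transcription of the proofs of Lemma 3, Lemma 4, and Theorem 5 with the harmless modification $\int_a w_j \rightsquigarrow \int_a w_j + C$.
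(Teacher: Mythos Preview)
Your proposal is correct and follows essentially the same idea as the paper: both fix the integral $M(z)$ from Theorem~\ref{t1} and then shift every branch $\int_a w_j(z)$ by the same constant $c$ to produce the family. The paper's execution is shorter than yours, however: instead of re-running the construction through Lemmas~\ref{l3} and~\ref{l4}, it simply observes that
\[
\prod_{j=1}^k\bigl[M-(c+\textstyle\int_a w_j(z))\bigr]=(M-c)^k+B_1(z)(M-c)^{k-1}+\cdots+B_k(z),
\]
so the new coefficients $B_j^c$ are polynomial combinations of the old meromorphic $B_j$ and the constant $c$, and then just writes these coefficients out explicitly; your exhaustiveness claim (that every integral arises this way) is additional and not treated in the paper.
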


\begin{proof} For a fixed function element $(w_0(z),  a)$,  the unique equation which define the integral of algebroidal function $W(z)$ is
$$
M^k+B_1(z)M^{k-1}+\cdots+B_k(z)=(M-\int_{a}w_1(z))(M-\int_{a}w_2(z))\cdots(M-\int_{a}w_k(z))=0. $$
Then the family of integral function with an arbitrarily constant $c$ is
\begin{eqnarray*}
&&M^k+B^c_1(z)M^{k-1}+\cdots+B^c_k(z)\\
&=&[M-(c+\int_{a}w_1(z))][M-(c+\int_{a}w_2(z))]\cdots[M-(c+\int_{a}w_k(z))]\\
&=&[(M-c)-\int_{a}w_1(z)][(M-c)-M-(c+\int_{a}w_2(z)]\cdots[(M-c)-\int_{a}w_k(z)]\\
&=&(M-c)^k+B_1(z)(M-c)^{k-1}+\cdots+B_k(z)=0.
\end{eqnarray*}
Hence, we can obtain the coefficients are
\begin{eqnarray*}
B^c_1&=&-cC_k^1+B_1;\\
B^c_2&=&c^2C_k^2-cC_{k-1}^1B_1+B_2;\\
\cdots&&\cdots\\
B^c_j&=&(-1)^j[c^jC_k^j-c^{j-1}C_{k-1}^{j-1}B_1+\cdots+(-1)^jB_j];\\
\cdots&&\cdots\\
B^c_k&=&(-1)^k[c^kC_k^k-c^{k-1}C_{k-1}^{k-1}B_1+\cdots+(-1)^kB_k]\\
\end{eqnarray*}
\begin{eqnarray*}
B^c_1&=&B_1-cC_k^1;\\
B^c_2&=&B_2-cC_{k-1}^1B_1+c^2C_k^2 ;\\
\cdots&&\cdots\\
B^c_j&=&B_j-cC_{k-j+1}^1B_{j-1}+\cdots+(-c)^jC_k^j;\\
\cdots&&\cdots\\
B^c_k&=&B_k-cC_{k-k+1}^1B_{k-1}+\cdots+(-c)^kC_k^k\\
\end{eqnarray*}
\end{proof}
%%%%%%%%%%%%%%%%%%%%%%%%%%%%%%%%%%%%%%%%%%%%%%%%%%%%%%%%%%%%%%%%%%%%%%%%%%%
\makeatletter

%% This turns on vertical rules at the right and left margins, to
%% better illustrate the spacing for certain multiple-line equation
%% structures.
\def\@makecol{\ifvoid\footins \setbox\@outputbox\box\@cclv
   \else\setbox\@outputbox
     \vbox{\boxmaxdepth \maxdepth
     \unvbox\@cclv\vskip\skip\footins\footnoterule\unvbox\footins}\fi
  \xdef\@freelist{\@freelist\@midlist}\gdef\@midlist{}\@combinefloats
  \setbox\@outputbox\hbox{\vrule width\marginrulewidth
        \vbox to\@colht{\boxmaxdepth\maxdepth
         \@texttop\dimen128=\dp\@outputbox\unvbox\@outputbox
         \vskip-\dimen128\@textbottom}%
        \vrule width\marginrulewidth}%
     \global\maxdepth\@maxdepth}
\newdimen\marginrulewidth
\setlength{\marginrulewidth}{.1pt}
\makeatother

%%%%%%%%%%%%%%%%%%%%%%%%%%%%%%%%%%%%%%%%%%%%%%%%%%%%%%%%%%%%%%%%%%%%%%%%%%

%%%%%%%%%%%%%%%%%%%%%%%%%%%

\setlength{\marginrulewidth}{0pt}


\begin{thebibliography}{HD}
\bibitem[1]{He} Y. Z. He and X. X. Zhi, Algebroidal Functions and Ordinary Differential Equation, Sicence Press, Beijing, 1988.

\bibitem[2]{David} David M. Goldschmidt, Algebraic Functions and Projective Curves, Springer; Softcover reprint of the original 1st ed. 2003.

\bibitem[3]{Davenport}  J. H. Davenport, On the integration of algebraic functions(Lecture Notes in Computer Science), Springer, Berlin, 1981.

\bibitem[4]{farkas} H. M. Farkas, Riemann surfaces, Springer-Verlag, Beijing, 1992.

\bibitem[5]{lv} Y. N. Lv and X. L. Zhang, Riemann Surfaces, Science Press, Beijing, 2003.

\bibitem[6]{sun1} D. C. Sun and Z. S. Gao, Value Distribution of Algebroidal Functions, Sicence Press, Beijing, 2014.

\bibitem[7]{sun2} D. C. Sun and H. F. Liu, Normal criteria on the family of meromorphic functions with shared set, Israel J. Math., 184(2011), 403-412.

\bibitem[8]{yangl} L. Yang, Value Distribution Theory. Springer-Verlag, Science Press,1993.

\bibitem[9]{sun3} D. C. Sun and Y. Y. Kong, The growth of algebroidal function. J.Math.Anal.Appl. 332(2007) 542-550.

\end{thebibliography}
\end{document}